\theoremstyle{plain}
 \newtheorem{theorem}{Theorem}[section]
 \newtheorem{proposition}{Proposition}[section]
 \newtheorem{lemma}{Lemma}[section]
 \newtheorem{corollary}{Corollary}[section]
\theoremstyle{definition}
\theoremstyle{remark}
 \numberwithin{equation}{section}
\renewcommand{\leq}{\leqslant}
\renewcommand{\geq}{\geqslant}
\title[ Dedekind sum/ M. Goubi]{New arithmetical proof of the reciprocity law for Dedekind sums}
\subjclass[2010]{Primary: 1B99,11F67, 11E45 ; Secondary:
11M26,11B68}
\keywords{Dedekind sums, fractional part function, Euclidean
division}
\author{\bfseries Mouloud  Goubi} 
\address{Mouloud Goubi\\
Department of Mathematics \\
University of UMMTO, P.O.Box 17,RP 15000\\
Tizi-ouzou, Algeria\\
Laboratoire d'Alg\`ebre et Th\'eorie des Nombres, USTHB Alger}
\email{mouloud.goubi@ummto.dz}
\begin{document}

\vspace{18mm} \setcounter{page}{1} \thispagestyle{empty}

\begin{abstract}
In this paper,  for coprime numbers $p$ and $q$ we consider the \\
Dedekind sums
\begin{eqnarray}\label{ded-sums}
S\left(p,q\right)=\sum_{r=1}^{q-1}\left\{\frac{r}{q}\right\}\left\{\frac{rp}{q}\right\}.
\end{eqnarray}
 First, we give an improvement of the proof given by H. Rademacher and A. Whiteman
 \cite{RAD1}, and we construct a new arithmetical proof for the reciprocity law
\begin{eqnarray}\label{reci-law}
S\left(p,q\right)+S\left(q,p\right)=\frac{p^2+q^2+1}{12pq}+\frac{p+q}{4}-\frac{3}{4}.
\end{eqnarray}
different of all the arithmetical proofs given until now.\\

Second, we found explicit formula of $S\left(p,q\right)$ for
$q\equiv 1,p-1,2,p-2,3,p-3,p-4$ and $4[p]$.
\end{abstract}

\maketitle

\section{Introduction and statement of main results}
\subsection{Introduction}
 In the literature there are several different proofs of the
 reciprocity law for Dedekind-Rademacher sums, H. Rademacher and E.
 Grosswald (in \cite{RAD2}) have constructed four proofs.\\
 In this work, we are interested by the arithmetical ones. First we give
 an improvement of the proof of H. Rademacher and A. Whiteman
\cite[\S 3]{RAD1}. In the second time, using Euclidean division, we
give a new arithmetical proof of such reciprocity law.\\
Taking $q\equiv b(p)$, the idea of the proof consists to write
$$S(p,q)+S(q,p)=P(b),$$ where $P$ is a polynomial of degree $3$.
After we establish that $P$ is a constant polynomial, and
$$P(b)=\frac{p^2+q^2+1}{12pq}+\frac{p+q}{4}-\frac{3}{4}$$
\\

 Finally from the reciprocity law and the expression of
 $S\left(p,q\right)$ we found explicit formula of $S\left(p,q\right)$ for $q\equiv 1,p-1,2,p-2,3,p-3,p-4$ and $4[p]$.\\

 In this work, we need the following two well-known results for
 finite sums
 \begin{equation}\label{sum1}
 \sum_{r=1}^{q-1}r=\frac{q\left(q-1\right)}{2},
 \end{equation}
  \begin{equation}\label{sum2}
 \sum_{r=1}^{q-1}r^2=\frac{q\left(q-1\right)\left(2q-1\right)}{6}
 \end{equation}
which can be proven by recursion.
\subsection{Statement of main results}
Let the first normalized Bernoulli function
\begin{eqnarray}\label{chi}
B_1(t):= \left\{
\begin{array}{ccc}
\left\{t\right\}-\frac{1}{2} ,&\quad \textrm{ if }\ t\in \mathbb{N}, \\
0\ , &\quad  \textrm{ otherwise}.
\end{array}
\right.
\end{eqnarray}
Where $\left\{t\right\}=t-\left\lfloor t\right\rfloor$, and
$\left\lfloor t\right\rfloor$ is the greater integer less then
$t$.\\

For $p,q$ two coprime numbers, where $q$ is any integer, and $p$, is
of course a positive integer consider the Dedekind sums
\begin{eqnarray}\label{spq}
s\left(p,q\right)=\sum_{r=1}^{q-1}B_1\left(\frac{r}{q}\right)B_1\left(\frac{rp}{q}\right)
\end{eqnarray}
and
\begin{eqnarray}\label{Spq}
S\left(p,q\right)=\sum_{r=1}^{q-1}\left\{\frac{r}{q}\right\}\left\{\frac{rp}{q}\right\}
\end{eqnarray}
If the class modulo $p$ of $q$ is $b$ $(1\leq b\leq p-1)$, then
$$s\left(q,p\right)=s\left(b,q\right)$$ and
$$S\left(q,p\right)=S\left(b,p\right).$$\\
Without losing generality only we consider in this work $p<q$ and
$p,q$ coprime. In this case
$$\sum_{r=1}^{q-1}B_1\left(\frac{rp}{q}\right)=0$$
then $s\left(p,q\right)$ can be transformed to
\begin{eqnarray*}
s\left(p,q\right)&=&\sum_{r=1}^{q-1}\left(\left\{\frac{r}{q}\right\}-\frac{1}{2}\right)B_1\left(\frac{rp}{q}\right)\\
&=&\sum_{r=1}^{q-1}\left\{\frac{r}{q}\right\}B_1\left(\frac{rp}{q}\right)
\end{eqnarray*}
\begin{theorem}\label{premiertheo}
For $p,q$ positive coprime numbers, we have
\begin{eqnarray}\label{loiricipro}
S\left(p,q\right)+S\left(q,p\right)=\frac{p^2+q^2+1}{12pq}+\frac{p+q}{4}-\frac{3}{4}
\end{eqnarray}
\end{theorem}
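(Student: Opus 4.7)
\emph{Proof proposal.} The plan is the one sketched in the introduction: exploit the Euclidean division $q=ap+b$ with $1\le b\le p-1$. This gives $\gcd(p,b)=1$ and, by the paragraph preceding the theorem, $S(q,p)=S(b,p)$. The aim is then to compute $S(p,q)+S(b,p)$ explicitly in $a,b,p$ and, after the substitution $q=ap+b$, match the right-hand side of~(\ref{loiricipro}).

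For each $r\in\{1,\ldots,q-1\}$, Euclidean division of $rp$ by $q$ yields $rp=q\lfloor rp/q\rfloor+s_r$ with $0<s_r<q$, and $r\mapsto s_r$ is a bijection of $\{1,\ldots,q-1\}$ since $\gcd(p,q)=1$, so
\[
S(p,q)=\frac{1}{q^{2}}\sum_{r=1}^{q-1}r\,s_r=\frac{p}{q^{2}}\sum_{r=1}^{q-1}r^{2}-\frac{1}{q}\sum_{r=1}^{q-1}r\Bigl\lfloor\frac{rp}{q}\Bigr\rfloor.
\]
The first piece is explicit by~(\ref{sum2}). For the second, regroup the $r$'s by the common value $m=\lfloor rp/q\rfloor\in\{0,\ldots,p-1\}$: using $q=ap+b$, the set $\{r:\lfloor rp/q\rfloor=m\}$ is the integer interval with endpoints $ma+\lceil mb/p\rceil$ and $(m+1)a+\lceil(m+1)b/p\rceil-1$. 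Summing the arithmetic progression inside each group writes $S(p,q)$ as a polynomial in $a$ whose coefficients are elementary sums of $m$ and $\lceil mb/p\rceil$. Applying exactly the same procedure to $S(b,p)=\frac{1}{p^{2}}\sum_{s=1}^{p-1}s\bigl(sb-p\lfloor sb/p\rfloor\bigr)$ produces, after relabelling, the same kind of sums in $\lceil mb/p\rceil$, weighted now by coefficients involving $p$ and $b$ rather than $a$.

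The main obstacle will be the algebraic bookkeeping showing that when $S(p,q)$ and $S(b,p)$ are added, every sum involving $\lceil mb/p\rceil$ cancels. The tool for this is the reflection identity $\lceil mb/p\rceil+\lceil(p-m)b/p\rceil=b+1$ (valid for $1\le m\le p-1$ because $p\nmid mb$), together with the closed forms~(\ref{sum1}) and~(\ref{sum2}). Once the ceiling sums have disappeared, the remaining expression is a polynomial in $a$ and $b$; substituting $a=(q-b)/p$ should collapse all residual $b$-dependence, confirming the ``constant polynomial'' claim of the introduction, and leave exactly $\dfrac{p^{2}+q^{2}+1}{12pq}+\dfrac{p+q}{4}-\dfrac{3}{4}$. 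As a sanity check I would verify the identity first on the base case $b=1$, where $S(1,p)=(p-1)(2p-1)/(6p)$ is immediate.
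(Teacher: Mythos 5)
Your architecture coincides with the paper's: Euclidean division $q=ap+b$, the reduction $S(q,p)=S(b,p)$, an expansion of $S(p,q)$ into elementary sums plus floor/ceiling sums, and a final polynomial identity in $b$. Your opening reduction $S(p,q)=\frac{p}{q^{2}}\sum_{r}r^{2}-\frac{1}{q}\sum_{r}r\lfloor rp/q\rfloor$ and the interval description of $\{r:\lfloor rp/q\rfloor=m\}$ are both correct. The gap is precisely at the step you yourself call the main obstacle, and the tool you propose for it does not work.

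The reflection identity $\lceil mb/p\rceil+\lceil(p-m)b/p\rceil=b+1$ evaluates only the linear sum $\sum_{m}\lceil mb/p\rceil=\tfrac{1}{2}(p-1)(b+1)$. For the sums that actually arise when you sum the arithmetic progressions over each interval and weight by $m$ --- namely $U=\sum_{m}m\lceil mb/p\rceil$ and $V=\sum_{m}\lceil mb/p\rceil^{2}$ --- the substitution $m\mapsto p-m$ combined with that identity returns the tautologies $U=U$ and $V=V$, so it yields no information. Nor could any symmetry: writing $\lceil mb/p\rceil=\lfloor mb/p\rfloor+1$, each of $U$ and $V$ contains the Dedekind sum $S(b,p)$ itself, via $\sum_{m}m\lfloor mb/p\rfloor=\frac{b(p-1)(2p-1)}{6}-pS(b,p)$ and $\sum_{m}\lfloor mb/p\rfloor^{2}=\frac{(b^{2}+1)(p-1)(2p-1)}{6p}-2bS(b,p)$, which are relations \eqref{equa11}--\eqref{equa12} of Lemma \ref{lemma2} applied to the pair $(b,p)$. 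So the ceiling sums do not ``disappear''; what actually saves the argument is that, with $A_{m}=ma+\lceil mb/p\rceil$, the quantity $\sum_{m}A_{m}(A_{m}-1)$ contains the combination $2aU+V$ and hence the coefficient $-2(ap+b)S(b,p)=-2qS(b,p)$; after the factors $\tfrac12$ and $\tfrac1q$ this contributes exactly $-S(b,p)=-S(q,p)$ to $S(p,q)$, and only then does adding $S(q,p)$ remove the non-elementary part. That computation --- the analogue of the $-qS(q,p)$ term in \eqref{equa17} of Lemma \ref{lemma4} --- is the heart of the proof and is absent from your plan as stated. Once it is supplied, the rest is the cubic-in-$b$ bookkeeping the paper performs after Corollary \ref{coro3}, and your base-case check $b=1$ matches the paper's separate treatment of that case.
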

The reciprocity law in \eqref{loiricipro} involves the reciprocity
law of $s\left(p,q\right)$:
\begin{eqnarray}\label{loiriciproprin}
s\left(p,q\right)+s\left(q,p\right)=\frac{p^2+q^2-3pq+1}{12pq}
\end{eqnarray}

Specifically in the case $q\equiv 1,2,3$ and $4[p]$, we obtain

\begin{theorem}\label{deuxiemetheo}
For $p<q$ positive coprime numbers, we have
\begin{eqnarray}\label{loiricipro1}
q\equiv1[p]~,~S\left(p,q\right)=\frac{p^2+q^2+1}{12pq}+\frac{q}{4}-\frac{p}{12}-\frac{1}{6p}-\frac{1}{4}
\end{eqnarray}
\begin{eqnarray}\label{equa1}
q\equiv2[p]~,~S\left(p,q\right)=\frac{p^2+q^2+1}{12pq}+\frac{q}{4}-\frac{p}{24}-\frac{5}{24p}-\frac{1}{4}
\end{eqnarray}
\begin{eqnarray}\label{equa2}
q\equiv3[p]~,~S\left(p,q\right)=\frac{p^2+q^2+1}{12pq}+\frac{q}{4}-\frac{p}{36}-\frac{5}{18p}-\frac{1}{3}\left\{\frac{p}{3}\right\}-\frac{1}{12}
\end{eqnarray}
\begin{eqnarray}\label{equa3}
q\equiv4[p]~,~S\left(p,q\right)=\frac{p^2+q^2+1}{12pq}+\frac{q}{4}-\frac{p}{48}-\frac{17}{48p}-\frac{1}{2}\left\{\frac{p}{4}\right\}
\end{eqnarray}
\end{theorem}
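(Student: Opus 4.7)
The plan is to derive each formula by combining the reciprocity law \eqref{loiricipro} of Theorem~\ref{premiertheo} with a direct evaluation of $S(b,p)$ for small $b$. If $q\equiv b\,[p]$ with $1\le b\le p-1$, the identity $S(q,p)=S(b,p)$ recorded before Theorem~\ref{premiertheo} gives
\begin{equation*}
S(p,q)=\frac{p^2+q^2+1}{12pq}+\frac{p+q}{4}-\frac{3}{4}-S(b,p),
\end{equation*}
so the problem reduces to computing $S(b,p)$ in closed form for $b\in\{1,2,3,4\}$.

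Since $1\le r\le p-1$ forces $\{r/p\}=r/p$, the substitution $\{br/p\}=br/p-\lfloor br/p\rfloor$ yields
\begin{equation*}
S(b,p)=\frac{b}{p^2}\sum_{r=1}^{p-1}r^2-\frac{1}{p}\sum_{r=1}^{p-1}r\left\lfloor\frac{br}{p}\right\rfloor,
\end{equation*}
and swapping the order of summation in the second term produces
\begin{equation*}
\sum_{r=1}^{p-1}r\left\lfloor\frac{br}{p}\right\rfloor=\sum_{k=1}^{b-1}\sum_{r=\lfloor kp/b\rfloor+1}^{p-1}r,
\end{equation*}
which is closed-form by \eqref{sum1} once $\lfloor kp/b\rfloor$ is expressed through the residue of $p$ modulo $b$ (the coprimality $\gcd(b,p)=1$ makes $kp/b$ non-integer for $1\le k\le b-1$, so the range endpoints are unambiguous).

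For $b=1$ the double sum is empty and \eqref{sum2} immediately gives $S(1,p)=\frac{(p-1)(2p-1)}{6p}$; plugging this into the reciprocity expression above reproduces \eqref{loiricipro1}. For $b=2$, coprimality forces $p$ odd, so $\lfloor p/2\rfloor=(p-1)/2$ is case-free and produces a single closed form yielding \eqref{equa1}. The substantive computations are $b=3$ and $b=4$, where the floors $\lfloor kp/b\rfloor$ split into sub-cases indexed by $p\bmod b$; the main bookkeeping obstacle is to verify that these sub-cases collapse into the uniform fractional corrections $-\frac{1}{3}\left\{\frac{p}{3}\right\}$ and $-\frac{1}{2}\left\{\frac{p}{4}\right\}$ seen in \eqref{equa2} and \eqref{equa3}. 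Substituting each resulting closed form for $S(b,p)$ into the reciprocity expression and simplifying then yields all four identities.
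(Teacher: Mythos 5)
Your proposal is correct, and for the cases $b=2,3,4$ it reaches the formulas by a genuinely different route than the paper. Both arguments share the same skeleton: write $S(p,q)=\frac{p^2+q^2+1}{12pq}+\frac{p+q}{4}-\frac{3}{4}-S(b,p)$ via Theorem \ref{premiertheo} and the identity $S(q,p)=S(b,p)$, then evaluate $S(b,p)$ in closed form (for $b=1$ the two proofs coincide, both invoking $S(1,p)=\frac{(p-1)(2p-1)}{6p}$ as in \eqref{digamma1}). The difference lies in how $S(b,p)$ is obtained for $b=2,3,4$. The paper applies the reciprocity law a \emph{second} time, to the pair $(p,b)$: since $S(p,2)$, $S(p,3)$, $S(p,4)$ are sums of at most three terms, Lemma \ref{lemma5} evaluates them by inspection, and Corollary \ref{coro4} then extracts $S(2,p)$, $S(3,p)$, $S(4,p)$ with no case analysis beyond the residue of $p$ entering through $\{p/b\}$. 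You instead compute $S(b,p)$ head-on, writing $S(b,p)=\frac{b}{p^2}\sum r^2-\frac{1}{p}\sum_r r\lfloor br/p\rfloor$ and interchanging summation to get $\sum_{k=1}^{b-1}\sum_{r=\lfloor kp/b\rfloor+1}^{p-1}r$; this is sound (coprimality indeed makes the endpoints unambiguous), and the residue-by-residue bookkeeping you defer does collapse as claimed — e.g.\ for $p\equiv 1\,[3]$ one finds $S(3,p)=\frac{5(p-1)^2}{18p}$, matching \eqref{equa27}. What the paper's route buys is the avoidance of that case analysis entirely, at the price of leaning on reciprocity twice; what your route buys is self-containedness for the evaluation of $S(b,p)$ — it would still work if one only trusted the reciprocity law in the single instance needed, and it generalizes mechanically to larger $b$. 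To make your argument complete you should actually carry out the $b=3$ and $b=4$ floor computations for each residue class of $p$, since that verification is the only substantive content beyond the reduction.
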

As a consequence we deduce  for $q\equiv p-1, p-2, p-3$ and $p-4$
modulo $p$ that
\begin{corollary}\label{coro1}
For $p<q$ positive coprime numbers, we have
\begin{eqnarray}\label{equa4}
q\equiv(p-1)[p]~,~S\left(q-p,q\right)&=&\frac{q}{4}-\frac{p^2+q^2+1}{12pq}+\frac{p}{12}+\frac{1}{6p}-\frac{1}{4}
\end{eqnarray}
\begin{eqnarray}\label{equa5}
q\equiv(p-2)[p]~,~S\left(q-p,q\right)=\frac{q}{4}-\frac{p^2+q^2+1}{12pq}+\frac{p}{24}+\frac{5}{24p}-\frac{1}{4}
\end{eqnarray}
\begin{eqnarray}\label{equa6}\\
\nonumber
q\equiv(p-3)[p]~,~S\left(q-p,q\right)=\frac{q}{4}-\frac{p^2+q^2+1}{12pq}+\frac{p}{36}+\frac{5}{18p}+\frac{1}{3}\left\{\frac{p}{3}\right\}-\frac{5}{12}
\end{eqnarray}
\begin{eqnarray}\label{equa7}\\
\nonumber
q\equiv(p-4)[p]~,~S\left(q-p,q\right)=\frac{q}{4}-\frac{p^2+q^2+1}{12pq}+\frac{p}{48}+\frac{17}{48p}+\frac{1}{2}\left\{\frac{p}{4}\right\}-\frac{1}{2}
\end{eqnarray}
\end{corollary}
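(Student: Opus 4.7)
My plan is to combine the reciprocity law of Theorem \ref{premiertheo} with the explicit evaluations of Theorem \ref{deuxiemetheo}, mediated by a simple identity relating $S(q-p,q)$ to $S(p,q)$. First I would establish, for any coprime positive integers $p<q$, the bridge identity
\[
S(q-p,q) + S(p,q) = \frac{q-1}{2}.
\]
Since $\gcd(p,q)=1$ forces $rp/q\notin\mathbb{Z}$ for $1\le r\le q-1$, we have $\{r(q-p)/q\}=\{-rp/q\}=1-\{rp/q\}$; inserting this into \eqref{Spq} and summing $\{r/q\}=r/q$ via \eqref{sum1} yields the identity at once.

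Second, assuming $q\equiv p-k\pmod p$ with $k\in\{1,2,3,4\}$, I would apply reciprocity to write
\[
S(p,q) = \frac{p^2+q^2+1}{12pq}+\frac{p+q}{4}-\frac{3}{4}-S(q,p),
\]
and use that $S(q,p)=S(p-k,p)$ depends only on the residue class of $q$ modulo $p$. Applying the bridge identity a second time, now on the pair $(k,p)$, gives $S(p-k,p)=\frac{p-1}{2}-S(k,p)$, so the whole problem reduces to obtaining a closed form for $S(k,p)$ as a function of $p$ alone.

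That closed form is already implicit in Theorem \ref{deuxiemetheo}: each of \eqref{loiricipro1}--\eqref{equa3} supplies $S(p,q')$ when $q'\equiv k\pmod p$, and a single application of reciprocity converts this into $S(q',p)=S(k,p)$, with the $q'$-dependent terms cancelling by construction. For $k=1,2$ one obtains a pure rational function of $p$; for $k=3,4$ the residue of $p$ modulo $k$ intervenes and is captured by $\{p/3\}$ and $\{p/4\}$, which is precisely why these fractional parts appear in \eqref{equa6}--\eqref{equa7}. Assembling the three ingredients case by case then yields the four stated formulas.

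I expect no conceptual obstacle beyond careful bookkeeping. The delicate step will be verifying, after combining the reciprocity right-hand side with $(q-1)/2$, $(p-1)/2$, and the explicit value of $S(k,p)$, that the $q$-dependent contributions collapse precisely to $\frac{q}{4}-\frac{p^2+q^2+1}{12pq}$ and that the remaining $p$-only corrections align with the coefficients of type $\frac{p}{12k}$, $\frac{1}{kp}$, and the rational offsets displayed in \eqref{equa4}--\eqref{equa7}.
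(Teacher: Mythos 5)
Your strategy is sound and is essentially the route the paper gestures at: your ``bridge identity'' is exactly Lemma \ref{lemma3}, and the chain
$S(q-p,q)=\frac{q-1}{2}-S(p,q)$, then $S(p,q)=\frac{p^2+q^2+1}{12pq}+\frac{p+q}{4}-\frac{3}{4}-S(q,p)$, then $S(q,p)=S(p-k,p)=\frac{p-1}{2}-S(k,p)$ with $S(k,p)$ supplied by Corollary \ref{coro4} (or by \eqref{digamma1} for $k=1$), is a valid derivation. The gap is in your final claim: carrying out this bookkeeping does \emph{not} yield the stated right-hand sides. For $k=1$ one finds $S(q,p)=\frac{p}{6}-\frac{1}{6p}$ and hence
\[
S(q-p,q)=\frac{q}{4}-\frac{p^2+q^2+1}{12pq}-\frac{p}{12}-\frac{1}{6p}+\frac{1}{4},
\]
i.e.\ the $p$-only terms come out with the \emph{opposite} signs to those in \eqref{equa4}, and the same sign reversal occurs in all four cases. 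This is not a slip in your plan: \eqref{equa4} is in fact false under the hypothesis $q\equiv p-1\,[p]$. Take $p=3$, $q=5$: then $S(q-p,q)=S(2,5)=1$, whereas the right-hand side of \eqref{equa4} equals $\frac{5}{4}-\frac{7}{36}+\frac{1}{4}+\frac{1}{18}-\frac{1}{4}=\frac{10}{9}$; the corrected formula displayed above does give $1$. (Similarly \eqref{equa5} fails at $p=7$, $q=12$.)

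The source of the mismatch is that \eqref{equa4}--\eqref{equa7} are precisely $\frac{q-1}{2}$ minus the right-hand sides of \eqref{loiricipro1}--\eqref{equa3}; by Lemma \ref{lemma3} they are therefore the correct values of $S(q-p,q)$ under the hypotheses $q\equiv 1,2,3,4\,[p]$, not $q\equiv p-1,\dots,p-4\,[p]$ as stated. So your method, executed honestly, proves a corrected corollary (either keep the stated congruences and negate the $p$-only terms, or keep the formulas and change the congruences to $q\equiv k\,[p]$), but it cannot ``yield the four stated formulas''; the delicate assembly step you defer is exactly where the asserted identities break down, and you should flag the discrepancy rather than claim agreement.
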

\section{Improvement of the short proof of Rademacher and Whiteman}
Here we give an improvement of the short proof of Rademacher and
Whiteman \cite{RAD1} different from the proof given by L. J.
Mordell. \cite{MORD}. To do this we need the following lemma.
\begin{lemma}\label{lemma1}
For $p,q$ two coprime numbers, we have
\begin{equation}\label{equa8}
\sum_{r=1}^{q-1}\sum_{t<\frac{rp}{q}}t=\frac{\left(p-1\right)\left(2pq-3p+2q\right)}{12}+ps\left(q,p\right)
\end{equation}
\end{lemma}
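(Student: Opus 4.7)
The plan is to avoid the direct route of writing $\lfloor rp/q\rfloor(\lfloor rp/q\rfloor+1)/2$ and expanding with $\lfloor rp/q\rfloor=rp/q-\{rp/q\}$, because that expansion naturally produces $S(p,q)$ (residues mod $q$) rather than the $s(q,p)$ (residues mod $p$) which is demanded by the right-hand side. Instead I will exchange the order of summation. Since $\gcd(p,q)=1$ and $1\le r\le q-1$, the ratio $rp/q$ is never an integer, so the inner sum is exactly $\sum_{t=1}^{\lfloor rp/q\rfloor}t$. The pairs $(r,t)$ with $1\le r\le q-1$ and $1\le t\le rp/q$ can be reindexed by fixing $t$: because $p<q$ the condition forces $1\le t\le p-1$, and for each such $t$ the admissible $r$ are those with $\lceil tq/p\rceil\le r\le q-1$, of which there are $q-1-\lfloor tq/p\rfloor$.

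Thus the double sum rewrites as
$$\sum_{t=1}^{p-1}t\bigl(q-1-\lfloor tq/p\rfloor\bigr)=(q-1)\,\frac{p(p-1)}{2}-\sum_{t=1}^{p-1}t\lfloor tq/p\rfloor,$$
where the first term uses \eqref{sum1}. Substituting $\lfloor tq/p\rfloor=tq/p-\{tq/p\}$ and invoking \eqref{sum2} gives
$$\sum_{t=1}^{p-1}t\lfloor tq/p\rfloor=\frac{q(p-1)(2p-1)}{6}-\sum_{t=1}^{p-1}t\,\{tq/p\}.$$

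The decisive step is to identify the residual sum with a Dedekind sum. Since $\{t/p\}=t/p$ for $1\le t\le p-1$, we have $\sum_{t=1}^{p-1}t\,\{tq/p\}=p\,S(q,p)$. Expanding $B_1=\{\cdot\}-1/2$ in the definition of $s(q,p)$ and using the permutation identity $\sum_{t=1}^{p-1}\{tq/p\}=(p-1)/2$ (valid because $\gcd(p,q)=1$) together with \eqref{sum1} yields the correction
$$s(q,p)=S(q,p)-\frac{p-1}{4},$$
so $\sum_{t=1}^{p-1}t\,\{tq/p\}=p\,s(q,p)+p(p-1)/4$. Collecting all contributions and factoring $(p-1)/12$ produces the bracket $6p(q-1)-2q(2p-1)+3p=2pq-3p+2q$, which is exactly the claimed identity. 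The main obstacle is conceptual rather than computational: one must exchange the summation order so that the surviving Dedekind sum involves residues modulo $p$; the rest is routine accounting using \eqref{sum1}, \eqref{sum2}, and the fact that multiplication by $q$ permutes the nonzero residues modulo $p$.
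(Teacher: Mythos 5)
Your proof is correct and is essentially the same as the paper's: both exchange the order of summation so that $t$ ranges over $1,\dots,p-1$, reduce to $\sum_{t=1}^{p-1}t\lfloor tq/p\rfloor$, expand the floor as $tq/p-\{tq/p\}$ to produce $pS(q,p)$, and convert to $ps(q,p)$ via $s(q,p)=S(q,p)-\frac{p-1}{4}$. The only difference is presentational (you count the admissible $r$ explicitly via $q-1-\lfloor tq/p\rfloor$, while the paper writes the complementary inner sum), and your final bookkeeping matches the stated identity.
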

\begin{proof}
Since $(p,q)=1$, $\frac{tq}{p}$ is not an integer for $1\leq t\leq
p-1$. Then $r<\frac{tq}{p}$ means that
$r\leq\left\lfloor\frac{tq}{p}\right\rfloor$ and then
\begin{eqnarray*}
\sum_{r=1}^{q-1}\sum_{t<\frac{rp}{q}}t&=&\sum_{t=1}^{p-1}\sum_{r>\frac{tq}{p}}t\\
&=&\sum_{t=1}^{p-1}\left(\sum_{r=1}^{q-1}t-\sum_{r<\frac{tq}{p}}t\right)\\
&=&\sum_{t=1}^{p-1}\sum_{r=1}^{q-1}t-\sum_{t=1}^{p-1}\sum_{r<\frac{tq}{p}}t\\
&=&\frac{\left(q-1\right)\left(p-1\right)p}{2}-\sum_{t=1}^{p-1}t\left\lfloor\frac{tq}{p}\right\rfloor\\
&=&\frac{\left(q-1\right)\left(p-1\right)p}{2}-\sum_{t=1}^{p-1}t\left(\frac{tq}{p}-\left\{\frac{tq}{p}\right\}\right)\\
&=&\frac{\left(q-1\right)\left(p-1\right)p}{2}-\frac{q}{p}\sum_{t=1}^{p-1}t^2+p\sum_{t=1}^{p-1}\left\{\frac{t}{p}\right\}\left\{\frac{tq}{p}\right\}\\
&=&\frac{\left(q-1\right)\left(p-1\right)p}{2}-\frac{q\left(p-1\right)\left(2p-1\right)}{6}+pS\left(q,p\right)\\
&=&\frac{\left(q-1\right)\left(p-1\right)p}{2}-\frac{q\left(p-1\right)\left(2p-1\right)}{6}+\frac{p\left(p-1\right)}{4}+ps\left(q,p\right)\\
&=&\frac{\left(2q-1\right)\left(p-1\right)p}{4}-\frac{q\left(p-1\right)\left(2p-1\right)}{6}+ps\left(q,p\right)\\
&=&\frac{\left(p-1\right)\left(2pq-3p+2q\right)}{12}+ps\left(q,p\right)
\end{eqnarray*}
\end{proof}
We compute the value of the sum
$\sum_{r=1}^{q-1}B_1\left(\frac{rp}{q}\right)^2$ with two different
methods, and the comparison of the results gives
the proof of the reciprocity law.\\
In one hand we have
\begin{equation*}
\sum_{r=1}^{q-1}B_1\left(\frac{rp}{q}\right)^2=\sum_{r=1}^{q-1}B_1\left(\frac{r}{q}\right)^2=\sum_{r=1}^{q-1}\left(\frac{r}{q}-\frac{1}{2}\right)^2=
\frac{1}{q^2}\sum_{r=1}^{q-1}r^2-\frac{1}{q}\sum_{r=1}^{q-1}r+\frac{1}{4}\sum_{r=1}^{q-1}1
\end{equation*}
Then
\begin{equation*}
\sum_{r=1}^{q-1}B_1\left(\frac{rp}{q}\right)^2=\frac{\left(q-1\right)\left(2q-1\right)}{6q}-\frac{q-1}{2}+\frac{q-1}{4}
\end{equation*}
Furthermore
\begin{equation*}
\sum_{r=1}^{q-1}B_1\left(\frac{rp}{q}\right)^2=\frac{\left(q-1\right)\left(2q-1\right)}{6q}-\frac{q-1}{4}
\end{equation*}
In other hand
\begin{equation*}
\sum_{r=1}^{q-1}B_1\left(\frac{rp}{q}\right)^2=\sum_{r=1}^{q-1}\left(\left\{\frac{rp}{q}\right\}-\frac{1}{2}\right)^2
=\sum_{r=1}^{q-1}\left(\frac{rp}{q}-\left\lfloor\frac{rp}{q}\right\rfloor-\frac{1}{2}\right)^2
\end{equation*}
Thus
\begin{equation*}
\sum_{r=1}^{q-1}B_1\left(\frac{rp}{q}\right)^2=\sum_{r=1}^{q-1}\left(\left(\frac{rp}{q}\right)^2-\frac{2rp}{q}\left(\left\lfloor\frac{rp}{q}\right\rfloor+\frac{1}{2}\right)
+\left(\left\lfloor\frac{rp}{q}\right\rfloor+\frac{1}{2}\right)^2\right)
\end{equation*}
Then
\begin{equation*}
\sum_{r=1}^{q-1}B_1\left(\frac{rp}{q}\right)^2=-\frac{p^2}{q^2}\sum_{r=1}^{q-1}r^2+2p\sum_{r=1}^{q-1}\frac{r}{q}B_1\left(\frac{rp}{q}\right)+
\sum_{r=1}^{q-1}\left(\left\lfloor\frac{rp}{q}\right\rfloor+\frac{1}{2}\right)^2
\end{equation*}
and
\begin{equation*}
\sum_{r=1}^{q-1}B_1\left(\frac{rp}{q}\right)^2=-\frac{p^2}{q^2}\sum_{r=1}^{q-1}r^2+2ps\left(p,q\right)+
\sum_{r=1}^{q-1}\left\lfloor\frac{rp}{q}\right\rfloor\left(\left\lfloor\frac{rp}{q}\right\rfloor+1\right)+\frac{q-1}{4}
\end{equation*}
But
\begin{equation*}
\left\lfloor\frac{rp}{q}\right\rfloor\left(\left\lfloor\frac{rp}{q}\right\rfloor+1\right)=2\sum_{t<\frac{rp}{q}}t
\end{equation*}
then
\begin{equation*}
\sum_{r=1}^{q-1}B_1\left(\frac{rp}{q}\right)^2=\frac{q-1}{4}-\frac{p^2\left(q-1\right)\left(2q-1\right)}{6q}+2ps\left(p,q\right)+
2\sum_{r=1}^{q-1}\sum_{t<\frac{rp}{q}}t
\end{equation*}
and from the value of $\sum_{r=1}^{q-1}\sum_{t<\frac{rp}{q}}t$ in
Lemma \eqref{lemma1} we deduce that
\begin{eqnarray*}
\sum_{r=1}^{q-1}B_1\left(\frac{rp}{q}\right)^2&=&\frac{q-1}{4}-\frac{p^2\left(q-1\right)\left(2q-1\right)}{6q}+2ps\left(p,q\right)\\
&+&\frac{\left(p-1\right)\left(2pq-3p+2q\right)}{6}+2ps\left(q,p\right)
\end{eqnarray*}
and then
\begin{eqnarray*}
2p\left(s\left(p,q\right)+s\left(q,p\right)\right)&=&-\frac{q-1}{4}+\frac{p^2\left(q-1\right)\left(2q-1\right)}{6q}\\
&-&\frac{\left(p-1\right)\left(2pq-3p+2q\right)}{6}+\frac{\left(q-1\right)\left(2q-1\right)}{6q}-\frac{q-1}{4}
\end{eqnarray*}
Finally
\begin{eqnarray*}
2p\left(s\left(p,q\right)+s\left(q,p\right)\right)=\frac{1}{6q}\left(p^2+q^2-3pq+1\right)
\end{eqnarray*}
Thus
\begin{eqnarray*}
s\left(p,q\right)+s\left(q,p\right)=\frac{p^2+q^2-3pq+1}{12pq}
\end{eqnarray*}

\section{Proof of Theorem \ref{premiertheo} and Theorem \ref{deuxiemetheo}}
We start this section with some useful preliminaries results.

\subsection{ finite sums involving fractional part function}
\begin{lemma}\label{lemma2}
\begin{equation}\label{equa9}
\sum_{r=1}^{q-1}\left\{\frac{rp}{q}\right\}=\frac{q-1}{2}
\end{equation}
\begin{equation}\label{equa10}
\sum_{r=1}^{q-1}\left\{\frac{rp}{q}\right\}^2=\frac{\left(q-1\right)\left(2q-1\right)}{6q}
\end{equation}
\begin{equation}\label{equa11}
\sum_{r=1}^{q-1}\left\lfloor\frac{rp}{q}\right\rfloor=\frac{\left(p-1\right)\left(q-1\right)}{2}
\end{equation}
\begin{equation}\label{equa12}
\sum_{r=1}^{q-1}\left\lfloor\frac{rp}{q}\right\rfloor^2=\frac{\left(p^2+1\right)\left(q-1\right)\left(2q-1\right)}{6q}-2pS\left(p,q\right)
\end{equation}
\end{lemma}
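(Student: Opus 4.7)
The plan is to exploit the single observation that, since $\gcd(p,q)=1$, the map $r\mapsto rp\bmod q$ is a permutation of $\{1,2,\dots,q-1\}$; therefore the multiset $\left\{\{rp/q\}:1\le r\le q-1\right\}$ coincides with $\{r/q:1\le r\le q-1\}$. Combined with the elementary sums \eqref{sum1} and \eqref{sum2}, this yields the first two identities directly. The last two then follow by writing $\lfloor rp/q\rfloor=rp/q-\{rp/q\}$ and recognizing the Dedekind sum inside the expansion.

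For \eqref{equa9}, the permutation argument gives
\[
\sum_{r=1}^{q-1}\left\{\frac{rp}{q}\right\}=\sum_{r=1}^{q-1}\frac{r}{q}=\frac{q-1}{2},
\]
and for \eqref{equa10} the same argument together with \eqref{sum2} gives $\sum_{r=1}^{q-1}\{rp/q\}^2=q^{-2}\sum_{r=1}^{q-1}r^2=(q-1)(2q-1)/(6q)$.

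For \eqref{equa11}, I would substitute $\lfloor rp/q\rfloor=rp/q-\{rp/q\}$, use \eqref{sum1} to evaluate $\sum rp/q=p(q-1)/2$, and subtract the result of \eqref{equa9}, obtaining $(p-1)(q-1)/2$. For \eqref{equa12}, squaring gives
\[
\left\lfloor\frac{rp}{q}\right\rfloor^{2}=\frac{p^{2}r^{2}}{q^{2}}-2p\cdot\frac{r}{q}\left\{\frac{rp}{q}\right\}+\left\{\frac{rp}{q}\right\}^{2},
\]
and summing term by term produces $(p^2/q^2)\sum r^2+\sum\{rp/q\}^2-2p\sum (r/q)\{rp/q\}$. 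The first piece is handled by \eqref{sum2}, the second by \eqref{equa10}, and the middle sum is precisely $S(p,q)$ as defined in \eqref{Spq}. Collecting terms gives $(p^2+1)(q-1)(2q-1)/(6q)-2pS(p,q)$.

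There is no real obstacle here; the only care needed is to note that the bijection property $\gcd(p,q)=1$ used in the first step is the same one that makes $\{rp/q\}\ne 0$ for $1\le r\le q-1$ (so that the rearrangement is legitimate and fractional parts coincide with $r/q$), and to track the factor of $p/q$ correctly when converting $\lfloor rp/q\rfloor$ sums into $\{rp/q\}$ sums. The identification of $\sum (r/q)\{rp/q\}$ with $S(p,q)$ in the last step is the one place where one should explicitly cite \eqref{Spq}.
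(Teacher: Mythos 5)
Your proposal is correct and follows essentially the same route as the paper: the permutation $r\mapsto rp\bmod q$ (which the paper implements via the inverse $\bar p$ of $p$ modulo $q$) handles \eqref{equa9} and \eqref{equa10}, and the substitution $\lfloor rp/q\rfloor = rp/q - \{rp/q\}$ together with the identification $\sum_{r}(r/q)\{rp/q\}=S(p,q)$ gives \eqref{equa11} and \eqref{equa12} exactly as in the text. No further comment is needed.
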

\begin{proof}
For the first relation \eqref{equa9}, let $\bar{p}$ the inverse
modulo $q$ of $p$ then
\begin{equation*}
\sum_{r=1}^{q-1}\left\{\frac{r}{q}\right\}=\sum_{r=1}^{q-1}\left\{\frac{r\bar{p}p}{q}\right\}=\sum_{t=1}^{q-1}\left\{\frac{tp}{q}\right\}
\end{equation*}
and
\begin{equation*}
\sum_{r=1}^{q-1}\left\{\frac{rp}{q}\right\}=\frac{1}{q}\sum_{r=1}^{q-1}r=\frac{q-1}{2},
\end{equation*}
For the second relation \eqref{equa10}, we have
\begin{equation*}
\sum_{r=1}^{q-1}\left\{\frac{rp}{q}\right\}^2=\sum_{r=1}^{q-1}\left\{\frac{r}{q}\right\}^2=\frac{1}{q^2}\sum_{r=1}^{q-1}r^2=
\frac{\left(q-1\right)\left(2q-1\right)}{6q}.
\end{equation*}
The proof of the relation \eqref{equa11} is
\begin{eqnarray*}
\sum_{r=1}^{q-1}\left\lfloor\frac{rp}{q}\right\rfloor&=&\sum_{r=1}^{q-1}\left(\frac{rp}{q}-\left\{\frac{rp}{q}\right\}\right)\\
&=&\frac{p}{q}\sum_{r=1}^{q-1}r-\sum_{r=1}^{q-1}\left\{\frac{rp}{q}\right\}\\
&=&\frac{p\left(q-1\right)}{2}-\frac{q-1}{2}\\
&=&\frac{\left(p-1\right)\left(q-1\right)}{2}
\end{eqnarray*}
Finally for the relation \eqref{equa12} we have
\begin{eqnarray*}
\sum_{r=1}^{q-1}\left\lfloor\frac{rp}{q}\right\rfloor^2&=&\sum_{r=1}^{q-1}\left(\frac{rp}{q}-\left\{\frac{rp}{q}\right\}\right)^2\\
&=&\sum_{r=1}^{q-1}\left(\frac{r^2p^2}{q^2}-2\frac{rp}{q}\left\{\frac{rp}{q}\right\}+\left\{\frac{rp}{q}\right\}^2\right)\\
&=&\frac{p^2}{q^2}\sum_{r=1}^{q-1}r^2+\sum_{r=1}^{q-1}\left\{\frac{rp}{q}\right\}^2-2pS\left(p,q\right)\\
&=&\frac{p^2\left(q-1\right)\left(2q-1\right)}{6q}+\frac{\left(q-1\right)\left(2q-1\right)}{6q}-2pS\left(p,q\right)\\
&=&\frac{\left(p^2+1\right)\left(q-1\right)\left(2q-1\right)}{6q}-2pS\left(p,q\right)
\end{eqnarray*}
\end{proof}
\begin{corollary}\label{coro2}
\begin{eqnarray}\label{digamma1}
S\left(1,q\right)=\frac{\left(q-1\right)\left(2q-1\right)}{6q},
\end{eqnarray}
\begin{eqnarray}\label{digamma2}
s\left(p,q\right)=S\left(p,q\right)-\frac{q-1}{4}.
\end{eqnarray}
\end{corollary}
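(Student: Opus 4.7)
The plan is to derive both identities by a short direct calculation, relying on the elementary fractional-part sums collected in Lemma \ref{lemma2} together with the one-factor representation of $s(p,q)$ recorded just before Theorem \ref{premiertheo}.

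For \eqref{digamma1}, I would specialise the definition \eqref{Spq} to $p=1$. Because $1\le r\le q-1$ forces $r/q\in(0,1)$, the fractional part is the identity and hence
$$S(1,q)=\sum_{r=1}^{q-1}\left(\frac{r}{q}\right)^{2}=\frac{1}{q^{2}}\cdot\frac{(q-1)q(2q-1)}{6}$$
by \eqref{sum2}, which is \eqref{digamma1}. Equivalently, \eqref{digamma1} is the $p=1$ instance of \eqref{equa10}.

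For \eqref{digamma2}, I would start from the reduced form
$$s(p,q)=\sum_{r=1}^{q-1}\left\{\frac{r}{q}\right\}B_{1}\left(\frac{rp}{q}\right)$$
obtained in Section 1. Since $\gcd(p,q)=1$ and $1\le r\le q-1$, the number $rp/q$ is not an integer, so $B_{1}(rp/q)=\{rp/q\}-\tfrac{1}{2}$. Distributing this expression and then inserting $\sum_{r=1}^{q-1}\{r/q\}=(q-1)/2$ (immediate from \eqref{sum1}) yields
$$s(p,q)=\sum_{r=1}^{q-1}\left\{\frac{r}{q}\right\}\left\{\frac{rp}{q}\right\}-\frac{1}{2}\cdot\frac{q-1}{2}=S(p,q)-\frac{q-1}{4},$$
which is \eqref{digamma2}.

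There is essentially no obstacle here: both parts collapse to a one-line manipulation once the correct representations of $S(p,q)$ and $s(p,q)$ are in hand. The only ingredients beyond the definitions are the standard sums \eqref{sum1} and \eqref{sum2} (or, equivalently, \eqref{equa9} and \eqref{equa10} from the preceding lemma).
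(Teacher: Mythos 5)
Your proof is correct and takes essentially the same route as the paper: \eqref{digamma1} is obtained by reducing $S(1,q)$ to $\frac{1}{q^{2}}\sum_{r=1}^{q-1}r^{2}$ via \eqref{sum2}, and \eqref{digamma2} by expanding $s(p,q)=\sum_{r=1}^{q-1}\left\{\frac{r}{q}\right\}\left(\left\{\frac{rp}{q}\right\}-\frac{1}{2}\right)$. The only cosmetic difference is that you evaluate the leftover sum $\sum_{r=1}^{q-1}\left\{\frac{r}{q}\right\}$ directly from \eqref{sum1}, whereas the paper invokes \eqref{equa9} of Lemma \ref{lemma2}; these are the same quantity.
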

\begin{proof}
Since
$$ S\left(1,q\right)=\sum_{r=1}^{q-1}\left\{\frac{r}{q}\right\}^2$$
then
$$S\left(1,q\right)=\frac{1}{q^2}\sum_{r=1}^{q-1}r^2.$$ From the
relation \eqref{sum2} we deduce the result \eqref{digamma1}.
\begin{eqnarray*}
s\left(p,q\right)&=&\sum_{r=1}^{q-1}B_1\left(\frac{r}{q}\right)B_1\left(\frac{rp}{q}\right)\\
&=&\sum_{r=1}^{q-1}\left\{\frac{r}{q}\right\}\left(\left\{\frac{rp}{q}\right\}-\frac{1}{2}\right)\\
&=&S\left(p,q\right)-\frac{1}{2}\sum_{r=1}^{q-1}\left\{\frac{rp}{q}\right\}
\end{eqnarray*}
From the relation \eqref{equa9} Lemma \ref{lemma2} we deduce that
$$s\left(p,q\right)=S\left(p,q\right)-\frac{q-1}{4}.$$
\end{proof}
\subsection{Some properties of the Dedekind sums $S(p,q)$}

\begin{lemma}\label{lemma3}
For $p,q$ coprime such that $p<q$, we have
\begin{equation}\label{equa13}
S\left(q-p,q\right)=\frac{q-1}{2}-S(p,q)
\end{equation}
\begin{proof}
Using the well known property of the fractional part function for
any real $x$:
$$\left\{x\right\}+\left\{-x\right\}=1$$  we deduce
that
\begin{eqnarray*}
S\left(q-p,p\right)&=&\sum_{r=1}^{q-1}\left\{\frac{r}{q}\right\}\left\{\frac{r(q-p)}{q}\right\}\\
&=&\sum_{r=1}^{q-1}\left\{\frac{r}{q}\right\}\left\{\frac{-rp}{q}\right\}\\
&=&\sum_{r=1}^{q-1}\left\{\frac{r}{q}\right\}\left(1-\left\{\frac{rp}{q}\right\}\right)\\
&=&\sum_{r=1}^{q-1}\left\{\frac{r}{q}\right\}-S\left(p,q\right)\\
\end{eqnarray*}
From the relation \eqref{equa9} lemma \ref{lemma2} we deduce that
$$S\left(q-p,q\right)=\frac{q-1}{2}-S(p,q)$$
\end{proof}
\end{lemma}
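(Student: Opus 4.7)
The plan is to unfold the definition of $S(q-p,q)$ directly and exploit the reflection identity $\{x\}+\{-x\}=1$, valid whenever $x\notin\mathbb{Z}$. First I would write
$$S(q-p,q)=\sum_{r=1}^{q-1}\left\{\frac{r}{q}\right\}\left\{\frac{r(q-p)}{q}\right\},$$
and observe that $r(q-p)/q = r - rp/q$, so that modulo $1$ we have $\{r(q-p)/q\}=\{-rp/q\}$. Because $\gcd(p,q)=1$ and $1\le r\le q-1$, the quantity $rp/q$ is never an integer, so the reflection identity applies and gives $\{-rp/q\}=1-\{rp/q\}$.

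Next I would substitute this back into the sum and split it linearly:
$$S(q-p,q)=\sum_{r=1}^{q-1}\left\{\frac{r}{q}\right\}\Bigl(1-\left\{\frac{rp}{q}\right\}\Bigr)=\sum_{r=1}^{q-1}\left\{\frac{r}{q}\right\}-S(p,q).$$
The remaining sum $\sum_{r=1}^{q-1}\{r/q\}$ is exactly $(q-1)/2$; this is the special case of equation \eqref{equa9} in Lemma \ref{lemma2} when $p=1$, but it also follows immediately from $\{r/q\}=r/q$ for $1\le r\le q-1$ together with the arithmetic sum \eqref{sum1}. Combining yields $S(q-p,q)=(q-1)/2-S(p,q)$, as claimed.

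There is essentially no real obstacle here; the only point that requires a moment's care is justifying that $rp/q$ is not an integer in the range $1\le r\le q-1$, which is where the coprimality hypothesis enters. After that, the argument is a two-line algebraic manipulation followed by an appeal to the already established value of $\sum\{r/q\}$. The hypothesis $p<q$ is not actually used in the identity itself — it is only a standing assumption for the section — so the proof would proceed identically without it, provided $\gcd(p,q)=1$ and $p\not\equiv 0\pmod q$.
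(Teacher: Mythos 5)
Your proof is correct and follows essentially the same route as the paper: rewrite $\{r(q-p)/q\}$ as $\{-rp/q\}$, apply the reflection identity $\{x\}+\{-x\}=1$, and evaluate the remaining sum $\sum_{r=1}^{q-1}\{r/q\}=(q-1)/2$. Your explicit justification that $rp/q$ is never an integer for $1\le r\le q-1$ (the point where coprimality is actually used, and where the reflection identity would otherwise fail) is a welcome precision that the paper's own proof leaves implicit.
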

The following proposition gives a new expression of
$S\left(p,q\right)$ as a sum of three quantities.
\begin{proposition}\label{propo1}
For $q>p$ positive coprime numbers, The Euclidean division of $q$
over $p$ gives $q=ap+b$ with $1\leq b\leq p-1$. Then we have
\begin{eqnarray}\label{equa14}
S\left(p,q\right)=\frac{1}{q^2}\sum_{n=0}^{p-1}\sum_{t=1}^{a}\left(an+t\right)\left(pt-n\right);~\text{if}~b=1
\end{eqnarray}
and
\begin{eqnarray}\label{equa15}\\
\nonumber
S(p,q)=\frac{1}{q^2}\sum_{n=0}^{p-1}\sum_{t=1}^{a}\left(an+t\right)
\left(tp-nb\right)+\frac{1}{q}\sum_{n=0}^{p-1}\sum_{t<\frac{nb}{p}}\left(an+t\right)+\frac{1}{q^2}\sum_{t=1}^{b-1}\left(ap+t\right)\left(q+pt-pb\right);~
\text{if}~b\geq2
\end{eqnarray}
\end{proposition}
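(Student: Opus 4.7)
The plan is to evaluate $S(p,q)$ by splitting the index $r$ of the defining sum into residue classes modulo $p$, using the Euclidean division $q=ap+b$. Since $p<q$, every $r\in\{1,\dots,q-1\}$ satisfies $\{r/q\}=r/q$, so I would start by rewriting
$$S(p,q)=\frac{1}{q}\sum_{r=1}^{q-1} r\left\{\frac{rp}{q}\right\}.$$
The next step is to partition the range of $r$ according to $q-1=ap+(b-1)$ into two blocks: the block $r=1,\dots,ap$, which I parametrize bijectively as $r=an+t$ with $0\le n\le p-1$, $1\le t\le a$; and (only when $b\ge 2$) the residual block $r=ap+t$ with $1\le t\le b-1$.

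For the first block the key computation is
$$rp=(an+t)p=n(q-b)+tp\equiv tp-nb\pmod q.$$
Because $tp-nb\le ap=q-b<q$, the value $tp-nb$ already lies in the correct window whenever it is nonnegative, so $\{rp/q\}=(tp-nb)/q$. If instead $tp-nb<0$ — equivalently $t<nb/p$ — exactly one addition of $q$ restores the range: $\{rp/q\}=(tp-nb+q)/q$. Multiplying by $r/q=(an+t)/q$, summing, and separating the ``signed'' main term from the correction gives
$$\sum_{r=1}^{ap}\frac{r}{q}\left\{\frac{rp}{q}\right\}=\frac{1}{q^2}\sum_{n=0}^{p-1}\sum_{t=1}^{a}(an+t)(tp-nb)+\frac{1}{q}\sum_{n=0}^{p-1}\sum_{t<\frac{nb}{p}}(an+t),$$
which already furnishes \eqref{equa14} in the case $b=1$ (where the second sum is empty and the residual block is empty).

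For $b\ge 2$, the second block is handled analogously: $rp=(ap+t)p=(q-b)p+tp\equiv p(t-b)\pmod q$, and $1\le t\le b-1$ makes $p(t-b)<0$, so $\{rp/q\}=(q+pt-pb)/q$. Multiplying by $(ap+t)/q$ and summing over $t$ produces the third term of \eqref{equa15}. Adding the two blocks yields the claimed identity.

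The main bookkeeping obstacle is the sign analysis: one must check that in the first block a single subtraction/addition of $q$ suffices, which uses $0\le tp\le ap=q-b$ and $0\le nb\le(p-1)b<pq$; and that the condition $tp=nb$ is ruled out on the boundary, which follows from $\gcd(p,b)=1$ together with $0<n<p$, so $p\nmid nb$. Once these inequalities are verified, the rest is a direct rearrangement of the double sum.
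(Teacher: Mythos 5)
Your decomposition is the same one the paper uses, but the sign analysis at its heart fails, and this is a genuine gap (one the paper's own proof shares). Take $p=5$, $q=9$, so $a=1$, $b=4$: for $n=4$, $t=1$ you get $tp-nb=5-16=-11$, and $-11+q=-2$ is still negative, so \emph{one} addition of $q$ does not restore the range; your stated justification $nb\le (p-1)b<pq$ only bounds the deficit by $pq$ and would permit up to $p$ additions. Likewise in the residual block, for $p=4$, $q=7$ ($a=1$, $b=3$) and $t=1$ you get $q+pt-pb=-1<0$, so $\{rp/q\}\ne (q+pt-pb)/q$ there either. A related defect is the range of $t$ in your correction term: as you derive it, $t$ is confined to $1\le t\le a$ (it is carved out of the block sum), whereas \eqref{equa15} is only true --- and is only usable later, where one needs $\sum_{t<nb/p}t=\frac12\left\lfloor \frac{nb}{p}\right\rfloor\left(\left\lfloor \frac{nb}{p}\right\rfloor+1\right)$ --- if $t$ runs over \emph{all} positive integers below $nb/p$, and $\left\lfloor \frac{nb}{p}\right\rfloor$ can exceed $a$ (e.g.\ $n=3$, $b=3$, $p=4$, $a=1$ gives $\left\lfloor \frac{nb}{p}\right\rfloor=2>a$).

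Numerically, for $p=4$, $q=7$ your restricted reading of the right-hand side gives $\frac{-20+49+13}{49}=\frac{6}{7}$, while $S(4,7)=\frac{11}{7}$; the unrestricted reading gives $\frac{-20+84+13}{49}=\frac{11}{7}$, which is correct, but then the three terms no longer equal the three pieces of the sum (the third term is $\frac{13}{49}$, while the true contribution of $r=5,6$ is $\frac{48}{49}$). The identity survives only because the undercount of wrap-arounds in the first block exactly cancels the overcount imputed to the residual block. Your argument, which accounts for each block separately under the false inequalities above, does not establish that cancellation and so does not prove \eqref{equa15}. To repair it you must track the actual number of reductions $k_{n,t}=-\left\lfloor (tp-nb)/q\right\rfloor$ in each block (and the analogous count for $r=ap+t$) and then prove the combinatorial identity equating the total correction $\frac1q\sum_{n,t}(an+t)k_{n,t}+\cdots$ to $\frac1q\sum_{n=0}^{p-1}\sum_{0<t<nb/p}(an+t)$; the case $b=1$, where no wrap-around occurs, is the only one your argument handles completely.
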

\begin{proof}
$$S\left(p,q\right)=\sum_{r=1}^{q-1}\left\{\frac{r}{q}\right\}\left\{\frac{rp}{q}\right\}$$
Since $r$ lies to the set $\left\{1,2,3,...,q-1\right\}$ and
$q=ap+b$ we can write $r=an+t$ with $0\leq n\leq p-1$ and $1\leq
t\leq a$ for $r$ between $1$ and $q-b$. And $r=ap+t$ for $1\leq
t\leq b-1$ when $r$ lies to $\left\{ap+1,..., ap+b\right\}$. Then we distingue two cases\\

{\bf b=1}: in this case $r$ lies to the set
$\left\{1,2,...,ap\right\}$ and then
\begin{eqnarray*}
S\left(p,q\right)&=&\sum_{r=1}^{q-1}\left\{\frac{r}{q}\right\}\left\{\frac{rp}{q}\right\}\\
&=&\sum_{n=0}^{p-1}\sum_{t=1}^{a}\left\{\frac{an+t}{q}\right\}\left\{\frac{p\left(an+t\right)}{q}\right\}
\end{eqnarray*}
One remarks that
$$p\left(an+t\right)=\left(ap+1\right)n+pt-n=qn+pt-n,$$
$$1<pt-n\leq q-1,$$
and
$$1<an+t\leq q-1.$$
Then
$$\left\{\frac{p\left(an+t\right)}{q}\right\}=\left\{\frac{pt-n}{q}\right\}=\frac{pt-n}{q}$$
and we obtain
$$S\left(p,q\right)=\frac{1}{q^2}\sum_{n=0}^{p-1}\sum_{t=1}^{a}\left(an+t\right)\left(pt-n\right)$$\\

{\bf $b\geq 2$}: In this case we have
\begin{eqnarray*}
S\left(p,q\right)&=&\sum_{r=1}^{q-1}\left\{\frac{r}{q}\right\}\left\{\frac{rp}{q}\right\}\\
&=&\sum_{r=1}^{ap}\left\{\frac{r}{q}\right\}\left\{\frac{rp}{q}\right\}+\sum_{r=ap+1}^{ap+b-1}\left\{\frac{r}{q}\right\}\left\{\frac{rp}{q}\right\}\\
&=&\sum_{n=0}^{p-1}\sum_{t=1}^{a}\left\{\frac{an+t}{q}\right\}\left\{\frac{p\left(an+t\right)}{q}\right\}
+\sum_{r=1}^{b-1}\left\{\frac{ap+t}{q}\right\}\left\{\frac{\left(ap+t\right)p}{q}\right\}
\end{eqnarray*}
Remark that
$$p\left(an+t\right)=\left(ap+b\right)n+pt-nb=qn+pt-nb,$$
$$p\left(ap+t\right)=qp+pt-pb,$$
$$pt-pb<0,~(1<q+pt-pb<q-1)$$ and $$|pt-nb|<q.$$ Furthermore
$$\left\{\frac{p\left(an+t\right)}{q}\right\}=\left\{\frac{pt-nb}{q}\right\}$$
and
$$\left\{\frac{p\left(ap+t\right)}{q}\right\}=\left\{\frac{pt-pb}{q}\right\}=\left\{\frac{q+pt-pb}{q}\right\}=\frac{q+pt-pb}{q}.$$
We deduce that
$$S\left(p,q\right)=\sum_{n=0}^{p-1}\sum_{t=1}^{a}\left\{\frac{an+t}{q}\right\}\left\{\frac{pt-nb}{q}\right\}
+\frac{1}{q^2}\sum_{t=1}^{b-1}\left(ap+t\right)\left(q+pt-pb\right),$$
but
\begin{eqnarray*}
\sum_{n=0}^{p-1}\sum_{t=1}^{a}\left\{\frac{an+t}{q}\right\}\left\{\frac{pt-nb}{q}\right\}
&=&\frac{1}{q^2}\sum_{n=0}^{p-1}\left(\sum_{t<\frac{nb}{p}}\left(an+t\right)\left(q+pt-np\right)+\sum_{t>\frac{nb}{p}}\left(an+t\right)\left(pt-nb\right)\right)\\
&=&\frac{1}{q^2}\sum_{n=0}^{p-1}\left(\sum_{t<\frac{nb}{p}}\left(an+t\right)\left(q+pt-np\right)
-\sum_{t<\frac{nb}{p}}\left(an+t\right)\left(pt-nb\right)\right)\\
&+&\frac{1}{q^2}\sum_{n=0}^{p-1}\sum_{t=1}^{a}\left(an+t\right)\left(pt-nb\right)\\
&=&\frac{1}{q^2}\sum_{n=0}^{p-1}\sum_{t=1}^{a}\left(an+t\right)\left(pt-nb\right)+\frac{1}{q}\sum_{n=0}^{p-1}\sum_{t<\frac{nb}{p}}\left(an+t\right).
\end{eqnarray*}
Then the result follows
\end{proof}
The following lemma computes the three sums in the relation
\eqref{equa15} of the Proposition \ref{propo1}.
\begin{lemma}\label{lemma4}
\begin{eqnarray}\label{equa16}
\sum_{n=0}^{p-1}\sum_{t=1}^{a}\left(an+t\right)
\left(tp-nb\right)&=&-\frac{1}{6p}\left(q-b\right)^2b\left(p-1\right)\left(2p-1\right)\\
\nonumber&+&\frac{1}{4p}\left(q-2b\right)\left(q-b\right)\left(p-1\right)\left(p+q-b\right)\\
\nonumber&+&\frac{1}{6p}\left(q-b\right)\left(p+q-b\right)\left(2q+p-2b\right)
\end{eqnarray}
\begin{eqnarray}\label{equa17}\\
\nonumber
\sum_{n=0}^{p-1}\sum_{t<\frac{nb}{p}}\left(an+t\right)=\frac{1}{12p}\left(p-1\right)\left(2p-1\right)\left(2qb-b^2+1\right)+
\frac{\left(p-1\right)\left(b-1\right)}{4}-qS\left(q,p\right)
\end{eqnarray}
\begin{eqnarray}\label{equa18}\\
\nonumber
\sum_{t=1}^{b-1}\left(ap+t\right)\left(q+pt-pb\right)=\left(q-b\right)\left(q-pb\right)\left(b-1\right)+\frac{b\left(qp+q-2pb\right)\left(b-1\right)}{2}+
\frac{pb\left(b-1\right)\left(2b-1\right)}{6}
\end{eqnarray}
\end{lemma}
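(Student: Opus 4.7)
The plan is to establish the three identities of Lemma~\ref{lemma4} by direct manipulation of nested sums, using only \eqref{sum1}, \eqref{sum2}, the auxiliary identities of Lemma~\ref{lemma2}, the relation $ap=q-b$, and the congruence $q\equiv b\pmod{p}$ (which gives both $S(b,p)=S(q,p)$ and $\gcd(b,p)=1$). For \eqref{equa16} I would expand $(an+t)(tp-nb)=ap\,nt-abn^{2}+pt^{2}-bnt$, so that the double sum splits into four products of one sum in $n$ over $\{0,\dots,p-1\}$ with one sum in $t$ over $\{1,\dots,a\}$, each of which is closed-form via \eqref{sum1}--\eqref{sum2}. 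Substituting $a=(q-b)/p$, together with $a(a+1)=(q-b)(p+q-b)/p^{2}$ and $2a+1=(2q+p-2b)/p$, the $n^{2}$-piece yields the summand containing $(q-b)^{2}b(p-1)(2p-1)$, the combined $nt$-pieces (with net coefficient $ap-b=q-2b$) yield the middle summand, and the $t^{2}$-piece yields the last summand on the right of \eqref{equa16}.

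For \eqref{equa17}, since $\gcd(b,p)=1$ the condition $t<nb/p$ is equivalent to $1\le t\le\lfloor nb/p\rfloor$, so the inner sum equals $an\lfloor nb/p\rfloor+\frac{1}{2}\lfloor nb/p\rfloor(\lfloor nb/p\rfloor+1)$. I would handle the resulting sums over $n$ separately: writing $\lfloor nb/p\rfloor=nb/p-\{nb/p\}$ gives $\sum_{n=0}^{p-1}n\lfloor nb/p\rfloor=\frac{b(p-1)(2p-1)}{6}-pS(b,p)$, and \eqref{equa11}--\eqref{equa12} of Lemma~\ref{lemma2}, applied with $(p,q)$ replaced by $(b,p)$, supply $\sum_{n=1}^{p-1}\lfloor nb/p\rfloor$ and $\sum_{n=1}^{p-1}\lfloor nb/p\rfloor^{2}$. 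Using $S(b,p)=S(q,p)$ and $ap=q-b$, the combined coefficient of $S(q,p)$ is $-ap-b=-(q-b)-b=-q$, while the remaining terms collapse via $2abp+b^{2}+1=2qb-b^{2}+1$ into the announced closed form.

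For \eqref{equa18} I would use $ap+t=q-b+t$ to rewrite $(ap+t)(q+pt-pb)=(q-b+t)(q-pb+pt)$, expand as a polynomial in $t$, and evaluate by \eqref{sum1}--\eqref{sum2}; the two mixed terms regroup as $\frac{b(b-1)}{2}\bigl[p(q-b)+(q-pb)\bigr]=\frac{b(qp+q-2pb)(b-1)}{2}$, giving the claimed right-hand side.

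The delicate point is \eqref{equa17}: one must identify precisely which Dedekind sum Lemma~\ref{lemma2} produces when applied to the pair $(b,p)$, and then verify that the $S(b,p)$ arising in $\sum\lfloor nb/p\rfloor^{2}$ combines cleanly with the $(q-b)S(q,p)$ arising in $a\sum_n n\lfloor nb/p\rfloor$ to yield exactly $-qS(q,p)$. The arithmetic required in \eqref{equa16} and \eqref{equa18} is routine but notationally heavy, so I would carry it out by isolating the contribution of each monomial of the expanded integrand and grouping only at the end.
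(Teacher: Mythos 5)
Your proposal is correct and follows essentially the same route as the paper: the same monomial expansion and substitution $ap=q-b$ for \eqref{equa16} and \eqref{equa18}, and for \eqref{equa17} the same split of the inner sum into $an\lfloor nb/p\rfloor+\frac{1}{2}\lfloor nb/p\rfloor(\lfloor nb/p\rfloor+1)$ with Lemma \ref{lemma2} applied to the pair $(b,p)$ and the coefficient of $S(q,p)$ assembled as $-(ap+b)=-q$. The point you flag as delicate is exactly the step the paper carries out, and your treatment of it is the same.
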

\begin{proof}
For the relation \eqref{equa16} we have
$$\left(an+t\right)\left(tp-nb\right)=-abn^2+\left(pa-b\right)nt=pt^2=-abn^2+\left(q-2b\right)nt+pt^2$$
and
$$\sum_{t=1}^{a}t=\frac{a\left(a+1\right)}{2},$$
$$\sum_{t=1}^{a}t^2=\frac{a\left(a+1\right)\left(2a+1\right)}{6}.$$
Thus
$$\sum_{t=1}^{a}\left(an+t\right)\left(tp-nb\right)=-ba^2n^2+\frac{a\left(a+1\right)\left(q-2b\right)n}{2}+\frac{ap\left(a+1\right)\left(2a+1\right)}{6},$$
and
\begin{eqnarray*}
\sum_{n=0}^{p-1}\sum_{t=1}^{a}\left(an+t\right)\left(tp-nb\right)&=&-\frac{ba^2p\left(p-1\right)\left(2p-1\right)}{6}\\
&+&\frac{ap\left(a+1\right)\left(q-2b\right)\left(p-1\right)}{4}\\
&+&\frac{ap^2\left(a+1\right)\left(2a+1\right)}{6}
\end{eqnarray*}
then
\begin{eqnarray*}
\sum_{n=0}^{p-1}\sum_{t=1}^{a}\left(an+t\right)\left(tp-nb\right)&=&-\frac{b\left(ap\right)^2\left(p-1\right)\left(2p-1\right)}{6p}\\
&+&\frac{ap\left(ap+p\right)\left(q-2b\right)\left(p-1\right)}{4p}\\
&+&\frac{ap\left(ap+p\right)\left(2ap+p\right)}{6p}
\end{eqnarray*}
Since $ap=q-b$,$ap+p=q+p-b$ and $2ap+p=2q+p-2b$, we obtain
\begin{eqnarray*}
\sum_{n=0}^{p-1}\sum_{t=1}^{a}\left(an+t\right)\left(tp-nb\right)&=&-\frac{b\left(q-b\right)^2\left(p-1\right)\left(2p-1\right)}{6p}\\
&+&\frac{\left(q-b\right)\left(q+p-b\right)\left(q-2b\right)\left(p-1\right)}{4p}\\
&+&\frac{\left(q-b\right)\left(q+p-b\right)\left(2q+p-2b\right)}{6p}
\end{eqnarray*}

For the second relation \eqref{equa17} we have
\begin{eqnarray*}
\sum_{n=0}^{p-1}\sum_{t<\frac{nb}{p}}\left(an+t\right)&=&a\sum_{n=0}^{p-1}n\left\lfloor\frac{nb}{p}\right\rfloor+\sum_{n=0}^{p-1}\sum_{t<\frac{nb}{p}}t\\
&=&a\sum_{n=0}^{p-1}n\left\lfloor\frac{nb}{p}\right\rfloor+
\frac{1}{2}\sum_{n=0}^{p-1}\left\lfloor\frac{nb}{p}\right\rfloor\left(\left\lfloor\frac{nb}{p}\right\rfloor+1\right)\\
\end{eqnarray*}
But
\begin{eqnarray*}
\sum_{n=0}^{p-1}\left\lfloor\frac{nb}{p}\right\rfloor\left(\left\lfloor\frac{nb}{p}\right\rfloor+1\right)&=&\sum_{n=0}^{p-1}\left\lfloor\frac{nb}{p}\right\rfloor
+\sum_{n=0}^{p-1}\left\lfloor\frac{nb}{p}\right\rfloor^2\\
&=&\frac{\left(p-1\right)\left(b-1\right)}{2}+\frac{\left(b^2+1\right)\left(p-1\right)\left(2p-1\right)}{6p}-2bS\left(b,p\right)
\end{eqnarray*}
and
\begin{eqnarray*}
\sum_{n=0}^{p-1}n\left\lfloor\frac{nb}{p}\right\rfloor&=&\frac{b\left(p-1\right)\left(2p-1\right)}{6}-pS\left(b,p\right)
\end{eqnarray*}
Since
$$\sum_{n=0}^{p-1}\sum_{t<\frac{nb}{p}}t=\frac{1}{2}\sum_{n=0}^{p-1}\left\lfloor\frac{nb}{p}\right\rfloor\left(\left\lfloor\frac{nb}{p}\right\rfloor+1\right)$$
and $$S\left(b,p\right)=S\left(q,p\right)$$ then
$$\sum_{n=0}^{p-1}\sum_{t<\frac{nb}{p}}\left(an+t\right)=\frac{1}{12p}\left(p-1\right)\left(2p-1\right)\left(2qb-b^2+1\right)+
\frac{\left(p-1\right)\left(b-1\right)}{4}-qS\left(q,p\right)$$

For the last one \eqref{equa18}, we have
\begin{eqnarray*}
\sum_{t=1}^{b-1}\left(ap+t\right)\left(q+pt-pb\right)&=&\sum_{t=1}^{b-1}\left(ap\left(q-pb\right)+\left(ap^2+q-pb\right)t+pt^2\right)\\
&=&\sum_{t=1}^{b-1}\left(\left(q-b\right)\left(q-pb\right)+\left(qp+q-2pb\right)t+pt^2\right)\\
&=&\left(q-b\right)\left(q-pb\right)\left(b-1\right)+\frac{b\left(qp+q-2pb\right)\left(b-1\right)}{2}+\frac{pb\left(b-1\right)\left(2b-1\right)}{6}
\end{eqnarray*}
\end{proof}
The substitution of the relations \eqref{equa16}, \eqref{equa17} and
\eqref{equa18} of Lemma \ref{lemma4} in the relation \eqref{equa15}
of the proposition \ref{propo1} conduct to the following result.
\begin{corollary}\label{coro3}
\begin{eqnarray}\label{equa0}\\
\nonumber S\left(p,q\right)+S\left(q,p\right)&=&-\frac{b\left(q-b\right)^2\left(p-1\right)\left(2p-1\right)}{6pq^2}\\
\nonumber&+&\frac{\left(q-b\right)\left(q+p-b\right)\left(q-2b\right)\left(p-1\right)}{4pq^2}\\
\nonumber&+&\frac{\left(q-b\right)\left(q+p-b\right)\left(2q+p-2b\right)}{6pq^2}\\
\nonumber&+&\frac{\left(q-b\right)\left(q-pb\right)\left(b-1\right)}{q^2}+\frac{b\left(qp+q-2pb\right)\left(b-1\right)}{2q^2}+
\frac{pb\left(b-1\right)\left(2b-1\right)}{6q^2}\\
\nonumber&+&\frac{1}{12pq}\left(p-1\right)\left(2p-1\right)\left(2qb-b^2+1\right)+
\frac{\left(p-1\right)\left(b-1\right)}{4q}
\end{eqnarray}
\end{corollary}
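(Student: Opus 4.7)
The plan is to carry out the direct substitution foreshadowed by the wording ``The substitution of the relations \eqref{equa16}, \eqref{equa17} and \eqref{equa18} of Lemma \ref{lemma4} in the relation \eqref{equa15} of the proposition \ref{propo1}''. So this corollary is essentially a bookkeeping exercise: I start from the three-piece decomposition of $S(p,q)$ produced by Proposition \ref{propo1}, and I replace each of the three sums by its closed form from Lemma \ref{lemma4}.

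First I would write \eqref{equa15} schematically as
\begin{equation*}
S(p,q) \;=\; \frac{1}{q^{2}}\,\Sigma_{1} \;+\; \frac{1}{q}\,\Sigma_{2} \;+\; \frac{1}{q^{2}}\,\Sigma_{3},
\end{equation*}
where $\Sigma_{1},\Sigma_{2},\Sigma_{3}$ are respectively the three sums evaluated in \eqref{equa16}, \eqref{equa17}, \eqref{equa18}. The decisive observation is that $\Sigma_{2}$ contains the term $-qS(q,p)$; dividing by $q$ gives $-S(q,p)$, which upon transposition produces exactly the left-hand side $S(p,q)+S(q,p)$ of the corollary. All the other terms on the right of \eqref{equa17} depend only on $p,q,b$, so no further Dedekind sum is hidden in them.

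Next I would divide $\Sigma_{1}$ by $q^{2}$ to obtain the first three lines of \eqref{equa0}, divide $\Sigma_{3}$ by $q^{2}$ to obtain the fourth line, and divide the remaining part of $\Sigma_{2}$ by $q$ to obtain the fifth line. At each step one has to be careful to preserve the factored form exactly as stated in Lemma \ref{lemma4}, without trying to simplify; the corollary quotes the ``raw'' substituted expression, and any attempt to collect like terms here would make the comparison with \eqref{equa15}--\eqref{equa18} harder to check.

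The only genuinely delicate point, and the place where a reader can be led astray, is the sign and location of the $qS(q,p)$ term in \eqref{equa17}: one must verify that $S(b,p)=S(q,p)$ (which is exactly the identity noted just before \eqref{spq} in the introduction, since $q\equiv b\ [p]$) and that the division by $q$ turns $-qS(q,p)$ into $-S(q,p)$, not $-S(q,p)/q$. Once this is pinned down, the proof is completed by pure transcription, so I expect no further obstacle beyond clerical accuracy.
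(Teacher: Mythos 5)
Your proposal is correct and is exactly the paper's own argument: the paper proves the corollary by the same direct substitution of \eqref{equa16}, \eqref{equa17} and \eqref{equa18} into \eqref{equa15}, with the $-qS(q,p)$ term of \eqref{equa17} divided by $q$ and transposed to the left-hand side to form $S(p,q)+S(q,p)$. Your identification of the $-qS(q,p)/q=-S(q,p)$ step as the only non-clerical point is accurate (the identity $S(b,p)=S(q,p)$ is already absorbed into the statement of \eqref{equa17}), so there is nothing further to add.
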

\subsection{Proof of Theorem \ref{premiertheo}}
In the case  {\bf b=1}. Taking $b=1$ in the relation \eqref{equa16}
Lemma \ref{lemma4} we obtain
\begin{eqnarray*}
\sum_{n=0}^{p-1}\sum_{t=1}^{a}\left(an+t\right)
\left(tp-n\right)&=&-\frac{1}{6p}\left(q-1\right)^2\left(p-1\right)\left(2p-1\right)\\
\nonumber&+&\frac{1}{4p}\left(q-2\right)\left(q-1\right)\left(p-1\right)\left(p+q-1\right)\\
\nonumber&+&\frac{1}{6p}\left(q-1\right)\left(p+q-1\right)\left(2q+p-2\right),
\end{eqnarray*}
and then
\begin{eqnarray*}
S\left(p,q\right)
&=&-\frac{1}{6pq^2}\left(q-1\right)^2\left(p-1\right)\left(2p-1\right)\\
\nonumber&+&\frac{1}{4pq^2}\left(q-2\right)\left(q-1\right)\left(p-1\right)\left(p+q-1\right)\\
\nonumber&+&\frac{1}{6pq^2}\left(q-1\right)\left(p+q-1\right)\left(2q+p-2\right).
\end{eqnarray*}
Since
$$S\left(q,p\right)=S\left(1,p\right)$$ and from the relation
\eqref{digamma1} Corollary \ref{coro2}, we deduce that
$$S\left(q,p\right)=\frac{\left(p-1\right)\left(2p-1\right)}{6p}.$$
Then
\begin{eqnarray}\label{equa19}\\
\nonumber S\left(p,q\right)+S\left(q,p\right)
\nonumber&=&\frac{1}{6pq^2}\left(p-1\right)\left(2p-1\right)\left(2q-1\right)\\
\nonumber&+&\frac{1}{4pq^2}\left(q-2\right)\left(q-1\right)\left(p-1\right)\left(p+q-1\right)\\
\nonumber&+&\frac{1}{6pq^2}\left(q-1\right)\left(p+q-1\right)\left(2q+p-2\right).
\end{eqnarray}
But
\begin{eqnarray}\label{equa20}\\
\nonumber\left(2q-1\right)\left(p-1\right)\left(2p-1\right)=4p^2q-2p^2-6pq+3p+2q-1
\end{eqnarray}
\begin{eqnarray}\label{equa21}\\
\nonumber\left(q-2\right)\left(q-1\right)\left(p-1\right)\left(p+q-1\right)=p^2q^2+pq^3-3p^2q-5pq^2-q^3+4q^2+2p^2+8pq-4p-5q+2
\end{eqnarray}
\begin{eqnarray}\label{equa22}\\
\nonumber
\left(q-1\right)\left(p+q-1\right)\left(2q+p-2\right)=2q^3+p^2q+3pq^2-6q^2-p^2-6pq+3p+6q-2
\end{eqnarray}

Substitute the relations \eqref{equa20}, \eqref{equa21} and \eqref{equa22} in the relation \eqref{equa19} we get the result.\\

Case {\bf $b\geq2$}\\

The decomposition of the elements of the expression \eqref{equa0} of
$S\left(p,q\right)+S\left(q,p\right)$ in Corollary \ref{coro3},
conduct to

$$b\left(q-b\right)^2=b^3-2qb^2+q^2b$$
$$(q-b)(p+q-b)(q-2b)=-2b^3+(2p+5q)b^2-(3pq+4q^2)b+pq^2+q^3$$
$$(q-b)(p+q-b)(2q+p-2b)=-2b^3+(3p+6q)b^2-(p^2+6pq+6q^2)b+p^2q+3pq^2+2q^3$$
$$(q-b)(q-pb)(b-1)=pb^3-(pq+q+p)b^2+(q^2+pq+q)b-q^2$$
$$b(b-1)(pq+q-2pb)=-2pb^3+(pq+2p+q)b^2-(q+pq)b$$
\begin{eqnarray*}
b(b-1)(2b-1)&=& 2b^3-3b^2+b.
\end{eqnarray*}
Then $S\left(p,q\right)+S\left(q,p\right)$ is the polynomial
$$P(b)=a_0b^3+a_1b^2+a_2b+a_3$$ of degree $3$; and its coefficients are
$$a_0=-\frac{\left(p-1\right)\left(2p-1\right)}{6pq^2}-\frac{p-1}{2pq^2}-\frac{1}{3pq^2}+\frac{p}{q^2}-\frac{p}{q^2}+\frac{p}{3q^2}$$
$$a_1=\frac{q\left(p-1\right)\left(2p-1\right)}{3pq}+\frac{\left(p-1\right)\left(2p+5q\right)}{4pq^2}+\frac{3p+6q}{2pq^2}-\frac{pq+p+q}{q^2}
+\frac{pq+2p+q}{2q^2}-\frac{p}{2q^2}-\frac{\left(p-1\right)\left(2p-1\right)}{12pq}$$
\begin{eqnarray*}
a_2&=&-\frac{(p-1)(2p-1)q}{6pq}-\frac{(p-1)(3pq+4q^2)}{4pq^2}-\frac{p^2+6pq+6q^2}{6pq^2}+\frac{q^2+pq+q}{q^2}-\frac{q+pq}{2q^2}\\
&+&\frac{p}{6q^2}+\frac{(p-1)(2p-1)q}{6pq}
\end{eqnarray*}

\begin{eqnarray*}
a_3&=&\frac{(p-1)(pq^2+q^3)}{4pq^2}+\frac{p^2q+3pq^2+2q^3}{6pq^2}-1+\frac{(p-1)(2p-1)}{12pq}-\frac{p-1}{4q}\\
&=&\frac{1}{12pq}\left[3(p-1)(pq+q^2)+2(p^2+3pq+2q^2)+(p-1)(2p-1)-3p(p-1)\right]-1.
\end{eqnarray*}

Then we obtain $$a_0=a_1=a_2=0$$ and
$$a_3=\frac{p^2+q^2+1}{12pq}+\frac{p+q}{4}-\frac{3}{4}$$
Furthermore the result follows.
\subsection{Proof of Theorem \ref{deuxiemetheo} and Corollary
\ref{coro1}} To prove the Theorem \ref{deuxiemetheo} we need the
following lemma
\begin{lemma}\label{lemma5}
\begin{eqnarray}\label{equa23}
S(p,2)=\frac{1}{4}
\end{eqnarray}
\begin{eqnarray}\label{equa24}
S(p,3)=\frac{1}{3}\left(2-\left\{\frac{p}{3}\right\}\right)
\end{eqnarray}
\begin{eqnarray}\label{equa25}
S\left(p,4\right)=1-\frac{1}{2}\left\{\frac{p}{4}\right\}
\end{eqnarray}
\end{lemma}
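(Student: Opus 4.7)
The plan is to compute each of the three Dedekind sums directly from the definition
\[
S(p,q)=\sum_{r=1}^{q-1}\left\{\frac{r}{q}\right\}\left\{\frac{rp}{q}\right\},
\]
exploiting the fact that for $q\in\{2,3,4\}$ the sum contains only $q-1$ terms and that $\{rp/q\}$ is determined by $rp$ modulo $q$.

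For $S(p,2)$, coprimality forces $p$ to be odd, so $\{p/2\}=1/2$, and the single term $r=1$ contributes $(1/2)(1/2)=1/4$, which is the asserted value.

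For $S(p,3)$ I would split on the two residue classes $p\equiv 1$ and $p\equiv 2\pmod 3$ allowed by coprimality. In each case one reads off $\{p/3\}$ and $\{2p/3\}$ directly and evaluates the two summands, obtaining $S(p,3)=5/9$ in the first case and $S(p,3)=4/9$ in the second. The closed form $\frac{1}{3}(2-\{p/3\})$ is then checked to yield exactly these values, using $\{p/3\}=1/3$ or $2/3$ in the respective subcases.

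The computation of $S(p,4)$ proceeds analogously. Coprimality with $4$ leaves the residues $p\equiv 1,3\pmod 4$; in both subcases the middle term simplifies, since $\{2p/4\}=1/2$ because $2p$ is an even integer not divisible by $4$. A direct evaluation of the three summands yields $7/8$ when $p\equiv 1\pmod 4$ and $5/8$ when $p\equiv 3\pmod 4$. The compact expression $1-\frac{1}{2}\{p/4\}$ is then seen to reproduce these two values, using $\{p/4\}=1/4$ or $3/4$ in the respective subcases. The only real obstacle is routine case analysis; the right-hand sides have been arranged precisely so that the fractional parts $\{p/3\}$ and $\{p/4\}$ absorb the residue-class dependence into a single uniform expression, and no deeper structural input (such as the reciprocity law) is needed at this stage.
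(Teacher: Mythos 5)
Your proposal is correct and follows essentially the same route as the paper: direct evaluation of the finitely many terms in each residue class ($p\equiv 1,2 \bmod 3$ giving $5/9,\,4/9$ and $p\equiv 1,3 \bmod 4$ giving $7/8,\,5/8$), then checking that the single closed form with $\{p/3\}$ or $\{p/4\}$ reproduces both values. No meaningful difference from the paper's argument.
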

\begin{proof}
The first relation \eqref{equa23} is trivial.\\
For the others, first we remark that if $q\equiv b[p]$ then $q=ap+b$
and $\frac{q}{p}=a+\frac{b}{p}$, since $0\leq b\leq p-1$ we deduce
that $\left\lfloor \frac{q}{p}\right\rfloor=a$ and then
$b=q-p\left\lfloor \frac{q}{p}\right\rfloor$ thus
$$b=p\left\{\frac{q}{p}\right\}$$.

For the second relation \eqref{equa24}, since $p\equiv1~or~2[3]$, we
obtain
$$S\left(1,3\right)=\frac{5}{9}~\text{and}~ S\left(2,3\right)=\frac{4}{9}.$$
We remark for $b\in\left\{1,2\right\}$ that
$$S\left(p,3\right)=S\left(b,3\right)=\frac{6-b}{9}$$, since $$b=3\left\{\frac{p}{3}\right\}$$ then
$$S\left(p,3\right)=\frac{2}{3}-\frac{1}{3}\left\{\frac{p}{3}\right\}$$
For the third relation \eqref{equa25}, since $p\equiv1~or~3[4]$, we
obtain
$$S\left(1,4\right)=\frac{7}{8}~\text{and}~ S\left(3,4\right)=\frac{5}{8}.$$
We remark for $b\in\left\{1,3\right\}$ that
$$S\left(p,4\right)=S\left(b,4\right)=\frac{8-b}{8}$$ and then
$$S\left(p,4\right)=1-\frac{1}{2}\left\{\frac{p}{4}\right\}$$
\end{proof}
\begin{corollary}\label{coro4}
Let $p$ any integer, then we have\\
if $\left(p,2\right)=1$ then
\begin{eqnarray}\label{equa26}
S(2,p)=\frac{7p}{24}+\frac{5}{24p}-\frac{1}{2},
\end{eqnarray}
if $\left(p,3\right)=1$ then
\begin{eqnarray}\label{equa27}
S(3,p)=\frac{5p}{18}+\frac{5}{18p}+\frac{1}{3}\left\{\frac{p}{3}\right\}-\frac{2}{3},
\end{eqnarray}
and if $\left(p,4\right)=1$:
\begin{eqnarray}\label{equa28}
S(4,p)=\frac{13p}{48}+\frac{17}{48p}+\frac{1}{2}\left\{\frac{p}{4}\right\}-\frac{3}{4}
\end{eqnarray}
\end{corollary}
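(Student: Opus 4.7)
The plan is to combine the reciprocity law of Theorem \ref{premiertheo} with the evaluations of $S(p,2)$, $S(p,3)$, $S(p,4)$ already proved in Lemma \ref{lemma5}. Since Theorem \ref{premiertheo} gives $S(p,q)+S(q,p)$ as an explicit rational function of $p$ and $q$, isolating $S(q,p)$ is just a matter of substituting $q\in\{2,3,4\}$ and transposing the known value of $S(p,q)$ from Lemma \ref{lemma5}.

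Concretely, for $(p,2)=1$, I would set $q=2$ in Theorem \ref{premiertheo} to obtain
\[
S(p,2)+S(2,p)=\frac{p^{2}+5}{24p}+\frac{p}{4}-\frac{1}{4},
\]
and then use \eqref{equa23} to replace $S(p,2)$ by $1/4$. Writing $\frac{p^{2}+5}{24p}=\frac{p}{24}+\frac{5}{24p}$ and combining with $p/4=6p/24$ produces the coefficient $7/24$ in front of $p$, giving precisely \eqref{equa26}.

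The cases $(p,3)=1$ and $(p,4)=1$ are completely analogous. Setting $q=3$ in Theorem \ref{premiertheo} and using \eqref{equa24} to substitute $S(p,3)=\frac{2}{3}-\frac{1}{3}\left\{\frac{p}{3}\right\}$, the fractional-part term crosses to the other side with opposite sign and produces the $+\frac{1}{3}\left\{\frac{p}{3}\right\}$ in \eqref{equa27}, while the rational-in-$p$ part is arranged exactly as in the previous case. Setting $q=4$ and using \eqref{equa25} to substitute $S(p,4)=1-\frac{1}{2}\left\{\frac{p}{4}\right\}$ gives \eqref{equa28} by the same mechanism.

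There is no genuine obstacle; the proof is a mechanical rearrangement. The only mildly delicate bookkeeping is collecting the $\frac{p^{2}+q^{2}+1}{12pq}$ piece of the reciprocity formula with the $(p+q)/4$ piece so that the three coefficients $7/24$, $5/18$ and $13/48$ emerge with the correct sign, and checking that the surviving constant terms equal $-1/2$, $-2/3$, and $-3/4$ after combining $(p+q)/4-3/4$ with the extracted $S(p,q)$.
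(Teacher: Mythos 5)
Your proposal is correct and coincides with the paper's own proof: the paper likewise specializes the reciprocity law \eqref{loiricipro} to $q=2,3,4$ and transposes the values $S(p,2)=\tfrac14$, $S(p,3)=\tfrac23-\tfrac13\left\{\tfrac{p}{3}\right\}$, $S(p,4)=1-\tfrac12\left\{\tfrac{p}{4}\right\}$ from Lemma \ref{lemma5}. Your arithmetic for all three cases checks out.
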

\begin{proof}
For the first relation \eqref{equa26}, applying the reciprocity law
\eqref{loiricipro} for $p$ and $2$ we get
$$S\left(p,2\right)+S\left(2,p\right)=\frac{p^2+5}{24p}+\frac{p+2}{4}-\frac{3}{4}$$
From the relation \eqref{equa23} Lemma \ref{lemma5} we deduce that
\begin{eqnarray*}
S\left(2,p\right)&=&\frac{p^2+5}{24p}+\frac{p+2}{4}-\frac{3}{4}-\frac{1}{4}\\
&=&\frac{7p}{24}+\frac{5}{24p}-\frac{1}{2}
\end{eqnarray*}
We do the same thing for the second and the third relation.
\end{proof}
\subsubsection{Proof of Theorem \ref{deuxiemetheo}}
The relation \eqref{loiricipro1} is the consequence of the
reciprocity law \eqref{loiricipro} and the expression
\eqref{digamma1} of $S\left(1,q\right)$ in Corollary \ref{coro2}.\\
To obtain the other relations we must combine the reciprocity law
\eqref{loiricipro} and the respective results in Lemma \ref{lemma5}.

\subsubsection{Proof of the Corollary \ref{coro1}}
The corollary \ref{coro1} is the consequence of the Theorem
\ref{deuxiemetheo} and the Corollary \ref{coro4}.


\begin{thebibliography}{9}

\bibitem{MORD} L. J. Mordell, \emph{The reciprocity formula for Dedekind sums}, Americal Journal of Mathematics Vol. {\bf 63}, No. {\bf
3} (Jul., 1951), pp. 593-598.
\bibitem{RAD1} H. Rademacher, A. Whiteman, \emph{Theorems on Dedekind
sums}, Americal Journal of Mathematics Vol. {\bf 63}, No. {\bf 2}
(April., 1941), pp. 377-407.
\bibitem{RAD2} H. Rademacher, E. Grosswald, \emph{Dedekind sums}, The mathematical association of
America. Washington,\ (1972).
\end{thebibliography}
\end{document}